\def\sumt#1#2#3#4{\sum_{#1=#2}^#3 #4}
\newcommand\pmatbig[2]{\begin{pmatrix} #1 \\ #2 \end{pmatrix}}
\newcommand\pmat[2]{\left( \begin{smallmatrix} #1\\#2 \end{smallmatrix} \right)}
\def\bp{{\bf p}}
\def\bPhi{{\boldsymbol{\Phi}}}
\def\bq{{\mathbf q}}
\def\bp{{\mathbf p}}
\def\bM{{\mathbf M}}
\def\d{\Omega}
\def\du#1#2#3{\overset{#3}{\underset{#2}{#1}}}
\def\Forall{\quad \hbox{ for all }}
\def\M{{\mathcal{M}}}
\def\T{\mathcal{T}}
\newcommand{\norm}[1]{\lVert{#1}\rVert}
\newcommand{\tn}[1]{\lVert\kern-1pt\lvert{#1}\rvert\kern-1pt\rVert}
\def\<{{\langle}}
\def\>{{\rangle}}
\def\Forall{\quad \hbox{ for all }}
\def\bq{{\mathbf q}}
\def\bM{{\mathbf M}}
\def\bq{{\mathbf q}}
\def\bM{{\mathbf M}}
\def\d{\Omega}
\def\Forall{\quad \hbox{ for all }}
\def\bq{\bar q}
\def\d{\Omega}
\def\Forall{\quad \hbox{ for all }}
\def\bq{\bar q}
\def\tb#1{{\|\kern-1pt| #1 \|\kern-1pt|}}
\def\nm2#1#2{\|#1\|_{2,\d_{#2}}}
\def\R{\mathbb{R}}
\def\T{\mathcal{T}}
 \theoremstyle{plain}
 \newtheorem{thm}{Theorem}[section]
 \numberwithin{equation}{section} 
 \numberwithin{figure}{section} 
 \theoremstyle{plain}
 \newtheorem{prop}[thm]{Proposition}
 \theoremstyle{plain}
 \newtheorem{algorithm}[thm]{Algorithm} 
 \theoremstyle{plain}
 \newtheorem{theorem}[thm]{Theorem}
 \theoremstyle{plain}
\theoremstyle{plain}
 \newtheorem{remark}[thm]{Remark}
 \theoremstyle{plain}
 \newtheorem{lemma}[thm]{Lemma}
\def\T{{\mathcal{T}}}
\def\M{{\mathcal{M}}}
\def\d{{\Omega}}
\def\Forall{\quad \hbox{ for all }}
\def\<{{\langle}}
\def\>{{\rangle}}
\def\R{\mathbb{R}}
\def\bq{{\mathbf q}}
\def\bM{{\mathbf M}}
\def\du#1#2#3{\overset{#3}{\underset{#2}{#1}}}
\def\bM{{\mathbf M}}
\begin{document}

\title[Efficient solvers for Reaction-Diffusion  singular problem]
{Efficient  discretization and preconditioning of the singularly perturbed Reaction-Diffusion problem}

\author{Constantin Bacuta}
\address{University of Delaware,
Mathematical Sciences,
501 Ewing Hall, Newark, DE 19716}
\email{bacuta@udel.edu}

\author{Daniel Hayes}
\address{University of Delaware,
Department of Mathematics,
501 Ewing Hall 19716}
\email{dphayes@udel.edu}

\author{Jacob Jacavage}
\address{Lafayette College,
Department of Mathematics,
Pardee Hall, Easton, PA 18042 }
\email{jacavagj@lafayette.edu}

\keywords{least squares, saddle point systems, mixed methods,  optimal test norm, Uzawa conjugate gradient, preconditioning}

\subjclass[2000]{74S05, 74B05, 65N22, 65N55}
\thanks{The work was supported  by NSF-DMS 2011615}%

\begin{abstract}

We consider the reaction diffusion problem and present efficient ways to discretize and precondition in the singular perturbed case when the reaction term dominates the equation. Using  the concepts of optimal test norm and saddle point reformulation, we  provide efficient discretization processes for uniform and non-uniform meshes. We present a preconditioning  strategy  that works for a  large range of the perturbation parameter. Numerical examples to illustrate  the efficiency of the method are included for a problem on the unit square. 

\end{abstract}
\maketitle

\section{Introduction}

We consider the singularly perturbed reaction diffusion problem 
\begin{equation}\label{PDE_RD}
   \left\{
\begin{array}{rcl}
     -\varepsilon\, \Delta u +cu=\ f & \mbox{in} \ \ \ \Omega,\\
      u =\ 0 & \mbox{on} \ \partial\Omega,\\ 
\end{array} 
\right. 
\end{equation}
for non-negative constant $\varepsilon$ and $c_{max} \geq c(x) \geq c_{min} >0$ on $\Omega$, a bounded domain in $\R^d$. We focus on the reaction dominated case, i.e., $\varepsilon \ll 1$. The discretization of the equation arises in solving  practical PDE models, such as heat transfer problems in thin domains \cite{MR1716824} as well as when using small step sizes in implicit time discretizations of parabolic reaction diffusion type problems \cite{Lin-Stynes12}. The solutions to these problems are characterized by exponential boundary layers 
which pose  numerical challenges due to the $\varepsilon$-dependence of the  error estimates  and of the stability constants. 

The standard variational formulation for \eqref{PDE_RD} is: Find $u\in H_0^1(\Omega)$ such that
\begin{equation}\label{RDstandard}
\varepsilon (\nabla u,\nabla v)+(cu,v)=(f,v)\Forall v\in H_0^1(\Omega).
\end{equation}
To discretize \eqref{RDstandard}, we propose a mixed variational formulation  that allows for higher order of approximation  and  for efficient preconditioning. Towards efficient  preconditioning, we introduce a simplified version of the Bramble, Pasciak, and Vasilevski (BVP)  multilevel precondtioner \cite{BPViski2000}, that works for nested spaces corresponding to uniform refinement meshes. 

A standard discretization of \eqref{RDstandard} can be solved by using a PCG algorithm on $V_h\subset H_0^1(\Omega)$ with the BVP or the simplified  preconditioner we introduce. The process is efficient for a certain range of $\varepsilon \geq \varepsilon_0$. To improve the rate of convergence (in the energy norm) for  the case $\varepsilon <\varepsilon_0$, we proceed by using two main tools. The first is a mixed formulation that is suitable to Saddle Point Least Squares (SPLS) discretization \cite{BJprec, BJ-AA,BJ-nc, BQ15}. This allows for a higher order approximation of the flux $\nabla u$ using the same discretization spaces as in the  standard discretization of  \eqref{PDE_RD}. The second tool is based on using refined meshes towards the boundary layer(s) of the problem. Even though the refinement approach presented in this paper is focused on the case in which $\Omega$ is the unit square in $\R^2$, as well as the use of Shiskin meshes, the ideas of mixed reformulation and multilevel preconditioning extend to more  general cases of $\Omega \subset \R^d$.  

Classical first-order system least squares methods for the reaction diffusion problem can be found in \cite{AdMaLMad-2015, cai-ku2020, Lin-Stynes12}. However, our approach of saddle point least squares reformulation is different. We reformulate the standard variational formulation \eqref{RDstandard} as a mixed variational formulation first, and then further reformulate it as a saddle point problem that allows preconditioning and extra approximability of the flux by using standard finite element spaces. The approach is related  to the Lagrange multiplier approach that leads to Stokes type systems. In our case, the Lagrange multiplier  is the variable of interest. This idea was used before in other particular problems, see e.g., \cite{BM12,Dahmen-Welper-Cohen12,demkowicz, Barrett-Morton81}. 

Many aspects of the SPLS formulation are also common to the DPG approach \cite{ bouma-gopalakrisnan-harb2014,car-dem-gop16,demkowicz-gopalakrishnanDPG10, demkowicz2011class,J5onDPG, dem-fuh-heu-tia19}. The SPLS method as a general method for solving mixed variational formulations is presented in \cite{BJprec, BJ-AA,BJ-nc, BQ15}. In this paper, we also combine SPLS formulation with the concept of optimal test norm \cite{Broersen-StevensonDPGcd14, Chan-Heuer-Bui-Demkowicz14, Dahmen-Welper-Cohen12, demkowicz-gopalakrishnanDPG10,J5onDPG, Barrett-Morton81, dem-fuh-heu-tia19} and a general preconditioning technique, introduced in \cite{BJprec}, in order to improve the stability and approximability of the final discretization process. 

The paper is organized as follows. In Section \ref{sec:BPV}, we review the Bramble-Pasciak-Vasilevski multilevel preconditioning technique and introduce a simplified preconditioner for the reaction diffusion equation. In Section \ref{sec:ReviewLSPP}, we introduce the notation and review the SPLS formulation, discretization, preconditioning, and the concept of optimal test norms. Sections \ref{SPLS4RD} and \ref{SPLSd4RD} detail how to apply the general SPLS theory to the reaction diffusion problem. Numerical results are included in Section \ref{num4rd}.

\section{Spectral representation of  $H^1$ norm and Multilevel Preconditioning}\label{sec:BPV}
\subsection{Abstract formulation} 
Assume $V$ is a Hilbert space with inner product $a(\cdot,\cdot)$ and that we have nested spaces $V_0\subset V_1 \subset \dots \subset V_J = V_h \subset V$. We let $(\cdot,\cdot)$ be another inner product on $V$ that induces a weaker (than $a(\cdot,\cdot)$) norm on $V$, and denote by   $Q_j: V \to V_j$ the orthogonal projection with respect to the $(\cdot,\cdot)$ inner product for $j=0,1,\cdots, J$. We will  we further assume that the following  norm equivalence
\begin{equation}\label{eq;NormRep}
\sumt{k}{0}{J}{\lambda_k\norm{(Q_k - Q_{k-1})v}^2} \simeq a(v,v), 
\end{equation}
holds on the entire space $V_h=V_J$. Here, the $\lambda_j$'s  are positive constants, $Q_{-1} =0$, and $Q_J$ coincides with the  identity on $V_J$. Furthermore, we assume the equivalence constants associated with the symbol `` $\simeq $" are independent of $h$ or $J$.

Let $c(\cdot,\cdot)$ be another symmetric bilinear form defined on $V$ and define the operators $A_h:V_h\to V_h$ and $C_h:V_h\to V_h$ by
$$
(A_hu,v) = a(u,v), \quad (C_hu,v) = c(u,v) \Forall v\in V_h.
$$
We assume that there are positive constants $\mu_1,\dots,\mu_J$ such that
\begin{equation}\label{eq;NormRepC}
\sumt{k}{0}{J}{\mu_k\norm{(Q_k - Q_{k-1})v}^2} \simeq c(v,v), 
\end{equation}
with equivalent constants independent of $h$ or $J$. The goal of this section is to present a simple multilevel preconditioning  technique for the following problem: Find $u_h\in V_h$ such that
$$
\tilde{a}(u_h,v):= \epsilon a(u_h,v) + c(u_h,v) = (f,v) \Forall v\in V_h,
$$
or in operator form 
\[
(\epsilon A_h + C_h)u_h = f_h:= Q_h f. 
\]

\subsection{Bramble-Pasciak-Vassilevski multilevel preconditioner}
Using that 
\[
\begin{aligned}
 Q_kQ_j  &= Q_jQ_k = Q_j , j\leq k\\
(Q_k-Q_{k-1})(Q_j - Q_{j-1})&=(Q_j - Q_{j-1}) \delta_{j,k}, \\
\end{aligned}
\]
the  equivalences \eqref{eq;NormRep} and \eqref{eq;NormRepC} can be written in the operator forms
\[
\begin{aligned}
L_h: & =\sumt{j}{0}{J}{\lambda_j(Q_j - Q_{j-1})}\simeq A_J = A_h,\\
K_h : &=\sumt{j}{0}{J}{\mu_j(Q_j - Q_{j-1})}\simeq C_J = C_h,
\end{aligned}
\]
respectively. Using the above (projection type) spectral representations and the properties of the projections, we obtain that
\[
\begin{aligned}
\epsilon L_h + C_h & = \sumt{j}{0}{J}{(\epsilon\lambda_j + \mu_j)(Q_j - Q_{j-1})}\simeq \epsilon A_h + C_h, \\
P_h : &= \sumt{j}{0}{J}{(\epsilon\lambda_j + \mu_j)^{-1}(Q_j - Q_{j-1})} = (\epsilon L_h + C_h)^{-1},
\end{aligned}
\]
are symmetric positive definite (discrete) operators on $V_h$. In addition, $P_h$ is a (uniform)  preconditioner for $\epsilon A_h + C_h$. 

The BPV approach further modifies $P_h$  by using another (sum of local projections type) operator that avoids computing  the action of $Q_k$ (hence mass matrix inversion). In the BPV approach, the projections $Q_k - Q_{k-1}$ are replaced by $(\tilde{Q}_k - \tilde{Q}_{k-1})^2$, where $\tilde{Q}_k$ is given by
\begin{equation}\label{eq:Qtilda}
\tilde{Q}_k\,v_h := \sumt{i}{1}{{n_k}}{\frac{(v_h,\varphi_i^k)}{(1,\varphi_i^k)}\varphi_i^k} \Forall  v_h \in V_h,
\end{equation}
where $\{\varphi_i^k, i=1,2, \cdots, n_k\}$ is  a basis for $V_k$.

\subsection{A simplified BVP (sBVP)  preconditioner}\label{sBVP}
A more general form of  the preconditioner $P_h$ is given by
\[
P_h =  \sumt{j}{0}{J}{\gamma_j(Q_j - Q_{j-1})},
\]
where $\gamma_j$ is a positive real number for each $j$. We further note that by using ``summation by parts''  for the general form of $P_h$, we obtain
\[
\begin{aligned}
P_h\, f_h &= \sumt{k}{0}{J}{\gamma_k(Q_k-Q_{k-1})f_h}\\
& = \left(\sumt{k}{0}{J}{\gamma_kQ_k} - \sumt{k}{1}{J}{\gamma_kQ_{k-1}}\right)f_h\\
& = \left(\sumt{k}{0}{J}{\gamma_kQ_k} - \sumt{k}{0}{{J-1}}{\gamma_{k+1}Q_k}\right)f_h\\
& = \gamma_J\, f_h +\sumt{k}{0}{{J-1}}{(\gamma_{k}-\gamma_{k+1})Q_kf_h}.
\end{aligned}
\]

We are able to further  simplify the preconditioner $P_h$ under three assumptions that are easily satisfied in our applications.  We first assume that $\{\gamma_j\}$ is a decreasing sequence of positive numbers:
\[
(A1)  \ \ \ \gamma_0 \ge \gamma_1 \ge \cdots \ge  \gamma_J>0.
\]
Second, we let $\{\tilde{Q}_k: V_h \to V_k \}$, with $k=0, 1, \cdots, J-1$, be any family of linear operators satisfying 
\[
(A2) \ \  \   (\tilde{Q}_k v_h, w_h)  = (v_h, \tilde{Q}_k w_h)  \Forall v_h, w_h \in V_h,
\]
and 
\[
(A3) \ \  \ c_1  (Q_k v_h, v_h) \leq (\tilde{Q}_k v_h, v_h)  \leq c_2 (Q_k v_h, v_h) \Forall v_h \in V_h,
\]
where $c_1, c_2$ are positive constants independent of $h$. We define a new operator $\tilde{P}_h: V_h \to V_h$ by
\[
\tilde{P}_h\, f_h  = \lambda_J\, f_h +\sumt{k}{0}{{J-1}}{( \gamma_{k}- \gamma_{k+1})\tilde{Q}_kf_h}.
\]
\begin{lemma}
Under the assumptions $(A1), (A2) $ and $(A3)$ we have that 
\begin{equation} \label{eq:esimate1}
c_1 (P_h v_h, v_h)  \leq (\tilde{P}_h v_h, v_h)  \leq c_2 (P_h v_h, v_h) \Forall v_h \in V_h.
\end{equation}
Consequently, if $P_h$ is a uniform preconditioner for $A_h$, then $\tilde{P}_h$ is also a uniform preconditioner  for $A_h$. 
\end{lemma}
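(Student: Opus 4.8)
The plan is to prove the two-sided bound \eqref{eq:esimate1} as an equivalence of the quadratic forms $v_h \mapsto (P_h v_h, v_h)$ and $v_h \mapsto (\tilde P_h v_h, v_h)$, and then to derive the preconditioner claim by passing to inverses. Pairing the summation-by-parts representations already established with $v_h$ gives
\[
(P_h v_h, v_h) = \gamma_J \norm{v_h}^2 + \sum_{k=0}^{J-1}(\gamma_k - \gamma_{k+1})(Q_k v_h, v_h),
\]
together with the identical expression for $(\tilde P_h v_h, v_h)$ in which each $Q_k$ is replaced by $\tilde Q_k$ and the leading term $\gamma_J\norm{v_h}^2$ is unchanged. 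Thus the comparison reduces entirely to the telescoped sums, term by term.

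First I would use (A1): since $\gamma_0 \ge \gamma_1 \ge \cdots \ge \gamma_J > 0$, every coefficient $\gamma_k - \gamma_{k+1}$ is nonnegative. Because $Q_k$ is the $(\cdot,\cdot)$-orthogonal projection onto $V_k$, we have $(Q_k v_h, v_h) = \norm{Q_k v_h}^2 \ge 0$, so each summand is a nonnegative multiple of $(Q_k v_h, v_h)$. Multiplying the chain $c_1 (Q_k v_h, v_h) \le (\tilde Q_k v_h, v_h) \le c_2 (Q_k v_h, v_h)$ from (A3) by $\gamma_k - \gamma_{k+1} \ge 0$ and summing over $k$ preserves both inequalities, and adding back the common leading term yields \eqref{eq:esimate1}.

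For the consequence I would use (A2) to note that each $\tilde Q_k$, hence $\tilde P_h$, is self-adjoint with respect to $(\cdot,\cdot)$; combined with the lower bound in \eqref{eq:esimate1} and the positive definiteness of $P_h$, this shows $\tilde P_h$ is symmetric positive definite. The form equivalence \eqref{eq:esimate1} is then the Loewner ordering $c_1 P_h \preceq \tilde P_h \preceq c_2 P_h$, and the operator monotonicity of inversion gives $\frac{1}{c_2} P_h^{-1} \preceq \tilde P_h^{-1} \preceq \frac{1}{c_1} P_h^{-1}$. If $P_h$ is a uniform preconditioner, so that the spectrum of $P_h A_h$ lies in an $h$-independent interval $[\alpha,\beta]$ (equivalently $\alpha P_h^{-1} \preceq A_h \preceq \beta P_h^{-1}$), then chaining these bounds places the spectrum of $\tilde P_h A_h$ in $[c_1\alpha, c_2\beta]$; its condition number is at most $c_2\beta/(c_1\alpha)$, independent of $h$.

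The argument is short, and the only point needing care is the common leading term $\gamma_J\norm{v_h}^2$, which is not touched by (A3). To keep it from spoiling either inequality one invokes the harmless normalization $c_1 \le 1 \le c_2$ in (A3): replacing $c_1$ by $\min(c_1,1)$ and $c_2$ by $\max(c_2,1)$ only weakens (A3) while keeping the constants $h$-independent, which is all the final preconditioner statement requires. The remaining steps, namely nonnegativity of the telescoped coefficients from (A1), symmetry from (A2), and the inversion step, are routine.
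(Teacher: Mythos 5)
Your proof is correct and follows essentially the same route as the paper's: multiply the inequalities in $(A3)$ by the nonnegative differences $\gamma_k-\gamma_{k+1}$ guaranteed by $(A1)$, sum over $k$, and use $(A2)$ to get symmetry and positive definiteness of $\tilde{P}_h$. You are in fact more explicit than the paper on two points it glosses over: the untouched leading term $\gamma_J\|v_h\|^2$, which needs the harmless normalization $c_1\le 1\le c_2$ exactly as you argue, and the inversion/spectral argument turning the form equivalence \eqref{eq:esimate1} into the ``consequently'' statement about $\tilde{P}_h$ being a uniform preconditioner.
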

\begin{proof} First, we note that due to $(A1)$ and $(A2)$ we have that $\tilde{P}_h$ is a symmetric positive definite operator on $V_h$. 
Multiplying the inequalities in $(A3)$ by the appropriate positive scalars and summing up the new  inequalities, we obtain \eqref{eq:esimate1}.
\end{proof}


\subsection{Preconditioning the reaction diffusion problem}
The standard variational formulation of problem \eqref{PDE_RD} is: Find $u\in H_0^1(\Omega)$ such that
$$
\epsilon (\nabla u,\nabla v) + (cu,v) = (f,v) \Forall v\in H_0^1(\Omega),
$$
where $(\cdot, \cdot)$ denotes the standard $L^2$ inner product for scalar or vector functions. We consider the nested  sequence of spaces $\{V_k\}_{k=\overline{0,J}}$ of continuous piecewise linear functions associated with the uniformly refined meshes  $\{\T_k\}_{k=\overline{0,J}}$  on $\Omega$, and define the operators
$$
(A_h u,v) = (\nabla u, \nabla v)  \Forall u, v\in V_h,
$$
$$
(C_hu,v) = (cu,v) \Forall u, v\in V_h.
$$
That is, $A_h$ is the discrete Laplacian operator and $C_h$ is  the discretization of the multiplication by the function $c(x)$  operator. It is well known, see \cite{BPViski2000, brenner-scott}, that for $\lambda_j = 1/h_j^2 \approx 4^j$ we have 
\[
L_h :=  \sumt{i}{0}{J}{\lambda_j(Q_j - Q_{j-1})}  \simeq A_J= A_h.
\]
Using that $ \sumt{j}{0}{J}{(Q_j - Q_{j-1})} = I$, for $\mu_j := c^*$, where $c^*$ is any value in $ [c_{min},c_{max}]$,  we obtain 
$$
K_h :=\sumt{j}{0}{J}{\mu_j(Q_j - Q_{j-1})}  \simeq (c^*I)  \simeq C_h.
$$
Thus, the BPV preconditioner for  $\varepsilon A_h + C_h$  becomes
\begin{equation}\label{eq:BVP}
\begin{aligned}
B^{BVP}_J &:= \sumt{j}{0}{J}{(\epsilon/h_j^2 + c^*)^{-1}(Q_j - Q_{j-1})}\\ 
&= \gamma_J I  +\sumt{j}{0}{{J-1}}{(\gamma_{j}-\gamma_{j+1})Q_j},
\end{aligned}
\end{equation}
where $\gamma_j=(\epsilon/h_j^2 + c^*)^{-1}$.

It was proved  in \cite{BPViski2000} that  $B^{BVP}_J$  is a uniform preconditioner for $\epsilon A_h + C_h$.
Using the family $\{\tilde{Q}_k: V_h \to V_k \}_{k=\overline{0, J-1}}$ defined in \eqref{eq:Qtilda}, with 
$\{\varphi_i^k, i=1,2, \cdots, n_k\}$  the standard nodal basis on $V_k$, we can define the simplified BPV (sBVP) preconditioner for the reaction diffusion problem 
\begin{equation}\label{eq:sBVP}
B^{sBVP}_J : = \gamma_J I +\sumt{j}{0}{{J-1}}{( \gamma_{j}- \gamma_{j+1})\tilde{Q}_j}, 
\end{equation}
where  $\gamma_j=(\epsilon/h_j^2 + c^*)^{-1}$.
To prove that $B^{sBVP}_J$ is a uniform preconditioner for $\epsilon A_h + C_h$, we only need to check that $(A1)-(A3)$ are satisfied. Assumption $(A1)$ is sataisfied as the function $h\to (\epsilon/h^2 + c^*)^{-1}$ is decreasing on $(0, 1]$. From the definition of $\tilde{Q}_k$ in \eqref{eq:Qtilda}, one can easily verify $(A2)$. We will prove that $(A3)$ holds next.
\begin{lemma} Let  $\{V_k\}_{k=\overline{0,J}}$  be a nested sequence of spaces of continuous piecewise linear functions associated with a set of quasi-uniform meshes $\{\T_k\}_{k=\overline{0,J}}$ on $\Omega$, and assume that  $\tilde{Q}_k$ is defined as in \eqref{eq:Qtilda}. Then assumption $(A3)$ holds.
\end{lemma}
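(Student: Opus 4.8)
The plan is to reduce assumption $(A3)$ to the spectral equivalence of the consistent and lumped mass matrices on each level, and then to establish that equivalence by an element-by-element scaling argument on a reference simplex. Fix $k$ and let $M_k$ be the mass matrix with entries $(M_k)_{ij} = (\varphi_i^k,\varphi_j^k)$ and $D_k$ the diagonal matrix with entries $(D_k)_{ii} = (1,\varphi_i^k)$, so that $\tilde Q_k$ from \eqref{eq:Qtilda} is exactly the operator whose coefficient vector is $D_k^{-1}b$, where $b_i := (v_h,\varphi_i^k)$. Since $\varphi_i^k\in V_k$, we have $b_i = (Q_k v_h,\varphi_i^k)$, so $b$ depends only on $Q_k v_h$ and ranges over all of $\R^{n_k}$ as $v_h$ ranges over $V_h$. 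A direct computation gives $(\tilde Q_k v_h,v_h) = b^T D_k^{-1}b$, and, because $Q_k v_h$ has coefficient vector $M_k^{-1}b$, also $(Q_k v_h,v_h) = b^T M_k^{-1}b$. Thus $(A3)$ is equivalent to $c_1\, b^T M_k^{-1}b \le b^T D_k^{-1}b \le c_2\, b^T M_k^{-1}b$ for all $b$, which, after inverting the symmetric positive definite matrices (with reciprocal constants), is precisely the spectral equivalence $M_k\simeq D_k$ with constants independent of $h$ and $k$.

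I would then prove $M_k\simeq D_k$ by assembling over elements. Writing $w = \sum_i b_i\varphi_i^k\in V_k$, we have $b^T M_k b = \|w\|_{L^2(\d)}^2 = \sum_{T\in\T_k}\|w\|_{L^2(T)}^2$ and $b^T D_k b = \sum_{T\in\T_k}\sum_i b_i^2\int_T\varphi_i^k$, so it suffices to compare the local consistent and lumped mass matrices $M_T$ and $D_T$ on each $T$. The key observation is that barycentric coordinates are affine invariant: the affine map from a fixed reference simplex $\hat T$ onto $T$ gives $M_T = (|T|/|\hat T|)\,\hat M$ and $D_T = (|T|/|\hat T|)\,\hat D$, where $\hat M$ and $\hat D$ are the fixed reference consistent and lumped matrices. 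The shape of $T$ enters only through the scalar $|T|$, which cancels in the comparison. Since $\hat M$ and $\hat D$ are fixed symmetric positive definite $(d+1)\times(d+1)$ matrices, they are spectrally equivalent with constants depending only on $d$; restricting to the interior (Dirichlet) degrees of freedom merely passes to a principal submatrix, which preserves the bounds. Summing the local inequalities over $T$ yields $M_k\simeq D_k$, hence $(A3)$.

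The main obstacle — and essentially the only place real content enters — is the local equivalence $\hat M\simeq\hat D$ together with the verification that its constants are genuinely independent of the mesh level and of the element shapes. Because the mass form carries no derivatives, the reference matrix is truly shape-independent, so for this particular estimate quasi-uniformity is not even needed; the only items requiring care are the bookkeeping for boundary nodes, dispatched by the principal-submatrix remark, and the correct reversal of the inequalities when passing from $M_k,D_k$ to their inverses.
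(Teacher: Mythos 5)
Your proof is correct, and its first half coincides with the paper's reduction, just written in dual variables: the paper expands $(\tilde{Q}_k v_h, v_h)=\sum_i (Q_k v_h,\varphi_i^k)^2/(1,\varphi_i^k)$, introduces the coefficient vector $\alpha$ of $Q_k v_h$, and reduces $(A3)$ to the equivalence $MD^{-1}M\simeq M$; substituting $b=M\alpha$ turns this into exactly your statement $D^{-1}\simeq M^{-1}$, so both arguments hinge on the same key fact, the spectral equivalence $M\simeq D$ of the consistent and lumped mass matrices. Where you genuinely differ is in how that fact is established. The paper derives it from the quasi-uniformity hypothesis of the lemma, asserting (without detailed proof) the nodal norm equivalence $(v_h,v_h)\simeq\sum_i (1,\varphi_i^k)\,v_h^2(z_i)$ with constants independent of $k$. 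You instead prove $M\simeq D$ element by element: for piecewise linear elements the local consistent and lumped matrices are both $(|T|/|\hat{T}|)$ times fixed reference matrices, so they are equivalent with constants depending only on the dimension $d$; summing over elements, and passing to a principal submatrix to account for the Dirichlet nodes, gives the global bound. This is more self-contained and strictly more general: it shows the constants in $(A3)$ depend only on $d$, so neither quasi-uniformity nor shape regularity is actually needed for this lemma (quasi-uniformity is needed elsewhere, e.g.\ for the multilevel norm equivalence \eqref{eq;NormRep}, but not here). Your two bookkeeping points --- that inverting symmetric positive definite matrices reverses and reciprocates the constants, and that imposing boundary conditions amounts to restricting to a principal submatrix --- are both handled correctly.
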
 
\begin{proof}
Define  the diagonal  matrix $D$ with entries $D_{ii}=(1, \varphi_i^k)$, and let $M$ be the mass matrix for the basis $\{\varphi_i^k, i=\overline{1:n_k} \}$. Using that the mesh $\T_k$ is quasi-uniform (could be just locally), we obtain
\[
(v_h, v_h) \simeq  \sum_{i=1}^{n_k} (1, \varphi_i) \ v_h^2(z_i) \Forall v_h \in V_h, 
\]
where the uniformity constants are indepnedent of $k$. Here, $z_i$ corresponds to the nodal function $\varphi_i^k$, i.e., $\varphi_i^k(z_j) =\delta_{ij}$.  
This is equivalent to 
\begin{equation}\label{eq:MequivD}
M \simeq  D \  \text{or} \  D^{-1} \simeq M^{-1},
\end{equation}
which implies 
\[
MD^{-1} M \simeq M.
\]

From \eqref{eq:Qtilda}, we obtain
\[
(\tilde{Q}_k\,v_h, v_h) = \sumt{i}{1}{{n_k}}{\frac{(v_h,\varphi_i^k)^2}{(1,\varphi_i^k)}} =\sumt{i}{1}{{n_k}}{\frac{(Q_kv_h,\varphi_i^k)^2}{(1,\varphi_i^k)}}.
\]
Let $Q_kv_h= \sum_{j=1}^{n_k} \alpha_i^k \varphi_j^k$ and $\alpha = (\alpha_j^k)_{j=\overline{1:n_k}}$. Then, using $MD^{-1} M \simeq M$, 
\[
(\tilde{Q}_k\,v_h, v_h) = \sumt{i}{1}{{n_k}}{\frac{(\sum_{j=1}^{n_k} \alpha_j^k \varphi_i^k,\varphi_i^k)^2}{(1,\varphi_i^k)}} =(D^{-1} M\alpha, M\alpha)_e \simeq (M\alpha, \alpha)_e, 
\]
where $(\cdot, \cdot)_e$ is the Euclidian inner product. Since 
\[ 
(M\alpha, \alpha)_e= (Q_kv_h, Q_kv_h) = (Q_kv_h, v_h), 
\] 
assumption $(A3)$ holds. 
\end{proof}

Remarks on the implementation of $B^{sBVP}_J$ are included in the Appendix. Other related works on preconditioning  singularly perturbed reaction-diffusion problem can be found in \cite{ kolev-xu-zhu2016} for the finite element discretization on quasi-uniform meshes, where a more general Additive Schwartz preconditioner is proposed, and in \cite{MacMad-2013} for finite difference discretization on fitted meshes, where a block-structured preconditioning approach is proposed. 

\section{The notation and the general SPLS approach}\label{sec:ReviewLSPP}
We now review the main ideas and concepts for the SPLS formulation, discretization, and preconditioning of a general  mixed variational formulation. Let $V$ and $Q$ be Hilbert spaces and $F\in V^*$. We are interested in problems of the form: Find $p\in Q$ such that
\begin{equation}\label{cont_problem}
b(v,p)=\langle F,v\rangle \Forall v\in V,
\end{equation}
where $b(\cdot,\cdot)$ is a continuous bilinear form on $V\times Q$ satisfying the following  $\inf-\sup$ condition on $V\times {Q}$:
\begin{equation} \label{inf-sup_a}
\du{\inf}{p \in Q}{} \ \du {\sup} {v \in V}{} \ \frac {b(v, p)}{|v|\,\|p\|} =m>0.
\end{equation} 

We view  $Q$, the trial space in \eqref{cont_problem}, as a subspace of larger (host) space $\tilde{Q}$ and equip $Q$ with the induced  inner product and norm from $\tilde{Q}$. The extra space $\tilde{Q}$ is needed for the SPLS {\it non-conforming discretization}. We assume the inner products $a_0(\cdot, \cdot)$ and $(\cdot, \cdot)_{\tilde{Q}}$ induce the  norms $|\cdot|_V =|\cdot| =a_0(\cdot, \cdot)^{1/2}$ and $\|\cdot\|_{\tilde{Q}}=\|\cdot\|=(\cdot, \cdot)_{\tilde{Q}}^{1/2}$. We denote the dual of $V$ by $V^*$ and the dual pairing on $V^* \times V$ by $\langle \cdot, \cdot \rangle$.  
We  further assume that $b(\cdot, \cdot)$ has a continuous extension to a bilinear form on $V\times \tilde{Q}$ satisfying
\begin{equation}\label{sup-sup_a}
\du{\sup}{p \in \tilde{Q}}{} \ \du {\sup} {v \in V}{} \ \frac {b(v, p)}{|v|\,\|p\|} =M <\infty.
\end{equation}
With the form $b$, we associate the operator $B:V\to \tilde{Q}$ defined by  
\[
(B v,q)_{\tilde{Q}}=b(v, q)  \Forall  v \in V, q \in \tilde{Q}.
\]

In this paper, we assume that $V_0 :=Ker(B)= \{ 0 \} $. Most of the considerations in this section extend to a nontrivial kernel  $V_0$, see \cite{BJ-nc}. It is well known that if a bounded form $b:V \times {Q} \to \R$ satisfies \eqref{inf-sup_a}, then problem \eqref{cont_problem} has a unique solution, see e.g., \cite{A-B, B09}. The standard {\it saddle point reformulation} of \eqref{cont_problem} (see \cite{BM12, BQ15,BQ17, Dahmen-Welper-Cohen12}) is: Find $(w, p) \in (V, Q)$ such that  
\begin{equation}\label{abstract:variational2}
\begin{array}{lclll}
a_0(w,v) & + & b( v, p) &= \langle F,v \rangle &\ \Forall  v \in V,\\
b(w,q) & & & =0   &\  \Forall  q \in Q.  
\end{array}
\end{equation}
\subsection{The concept of optimal test norm} \label{sec:opt} If we assume  that $Range (B) \subset Q$ and that the operator $B:V\to Q$ is injective ($V_0=Ker(B)=\{0\}$) then, as in  \cite{Broersen-StevensonDPGcd14, Chan-Heuer-Bui-Demkowicz14, Dahmen-Welper-Cohen12, demkowicz-gopalakrishnanDPG10, Barrett-Morton81}, we can define an equivalent norm on $V$
\[
|v|_{opt}:= \du {\sup} {p \in Q}{} \ \frac {b(v, p)}{\|p\|} =  \du {\sup} {p \in Q}{} \ \frac {(B v, p)}{\|p\|}= \|Bv\|_Q= \|Bv\|_{\tilde{Q}},
\]
that is operator dependent. We will refer to this as the {\it optimal test norm}. By replacing the form $a_0(\cdot, \cdot)$ in \eqref{abstract:variational2}  with the inner product induced by the {\it optimal test norm}, i.e., $a_{opt}(u, v):= (Bu, Bv)_Q$, we obtain that  both the continuity constant $M$ and the $\inf-\sup$ constant $m$ are equal to $1$. Thus, the stability (at the continuous level) of the new saddle point formulation is optimal.


\subsection{The abstract variational  formulation at the discrete level}
The  non-conforming (trial space) {\it SPLS  discretization} of \eqref{cont_problem} is defined as a  saddle point discretization of \eqref{abstract:variational2} with $V_h \subset V$ and with $\M_h$ a subspace of $\tilde{Q}$, but in general {\it not necessarily a subspace of} $Q$. Assume that standard discrete $\sup-\sup$ and  $\inf-\sup$ conditions hold for the pair $(V_h, \M_h)$ with constants $M_h$ and $m_h$ respectively.
The discrete mixed variational formulation of \eqref{cont_problem} is: Find $ p_h \in \M_h$ such that 
\begin{equation}\label{abstract:variational1_h} 
b( v_h, p_h) = \langle F,v_h \rangle \ \Forall  v_h \in V_h.
\end{equation}
In general, this might not have a unique solution. However, discretization of \eqref{abstract:variational2}: Find $(w_h, p_h) \in V_h \times \M_h$ such that  
\begin{equation}\label{abstract:variational2_h}
\begin{array}{lclll}
a_0(w_h,v_h) & + & b( v_h, p_h) &= \langle F,v_h \rangle &\ \Forall  v_h \in V_h,\\
b(w_h,q_h) & & & =0   &\  \Forall  q_h \in\M_h, 
\end{array}
\end{equation}
always has a unique solution. The variational formulation \eqref{abstract:variational2_h} is the {\it non-conforming saddle point least squares} (n-c SPLS) {\it discretrization}  of \eqref{cont_problem}.

Using $(\cdot, \cdot)$, another (weaker) inner product on $V$, we can define the discrete operator $A_h : V_h \to V_h$ associated with the form $a_0(\cdot ,\cdot)$ on $V_h$ by
\[ 
(A_h v_h, w_h)=a_0(v_h, w_h) \   \Forall v_h, w_h \in V_h 
\]
and the linear operators $B_h:V_h\to \M_h$ and  $B_h^*: {\M}_h \to V_h$  by 
\[ 
(B_h v_h,q_h)_{\M_h}=b(v_h, q_h)= (B_h^*q, v_h) \   \Forall v_h \in V_h, \ q \in \M_h. 
\]
The Schur complement of  \eqref{abstract:variational2_h} is denoted by  $S_h=B_h\, A_h^{-1} B_h^*$. In what follows, $V_h\subset V$ will be chosen as a standard conforming finite element space. On the other hand, each choice of  the space $\M_h$, possibly non-conforming to $Q$, leads  to a new  SPLS discretization for which $p_h \in \M_h \subset  \tilde Q$.
\subsection{Constructing a discrete trial space from a general test space}\label{sec:Mh}
Let $V_h$ be a {\it finite element subspace} of $V$. Following \cite{BJ-AA, BJ-nc}, we provide a general construction of discrete trial spaces $\M_h$, defined using the operator $B$ associated with the original problem \eqref{cont_problem}. Let $\tilde{\M}_h \subset \tilde{Q}$ be a finite dimensional subspace equipped with the inner product $(\cdot,\cdot)_h$. 
The  corresponding induced norm on $\tilde{\M}_h$  will be denoted by $\|\cdot\|_h$. Define the representation operator $R_h:\tilde{Q}\to \tilde{\M}_h$ by 
\begin{equation*}\label{rep_operator}
(R_hp,q_h)_h:=(p,q_h)_{\tilde{Q}} \Forall q_h \in \tilde{\M}_h.
\end{equation*}
In the case when $(\cdot,\cdot)_h$ coincides with the inner product on $\tilde{Q}$, we have that $R_h$ is precisely the orthogonal projection onto $\tilde{\M}_h$. Since the space $\tilde{\M}_h$ is finite dimensional, there exist constants $k_1,k_2$ such that 
\begin{equation}\label{norm-equiv}
k_1\|q_h\|\leq \|q_h\|_h\leq k_2\|q_h\| \Forall q_h \in \tilde{\M}_h.
\end{equation}
Using the operator $R_h$, we define $\M_h$ as
\[
\M_h:=R_hBV_h\subset \tilde{\M}_h \subset \tilde{Q}.
\]
The following proposition, see \cite{B09} and \cite{BJ-nc}, gives a sufficient condition on $R_h$ to ensure that the family $\{(V_h, R_h  B V_h)\}$ is stable.
\begin{prop}\label{discrete-Stability}
Assume that 
\begin{equation}\label{eq:ProjCoerces}
\|R_h  q_h \|_h \geq \tilde{c}\,  \| q_h\| \ \Forall q_h \in  B V_h,
\end{equation}
with a constant $\tilde{c}$ independent of $h$. Then 
\begin{equation}
\inf_{p_h \in \M_h}\sup_{v_h\in V_h} \frac{b(v_h,p_h)}{|v_h|\  \|p_h\|_h} \geq \tilde{c}\, m  >0, 
\end{equation}
vhere $m$ is defined in \eqref{inf-sup_a}. 
\end{prop}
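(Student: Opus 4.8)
The plan is to prove the discrete inf-sup bound by exhibiting, for each $p_h\in\M_h$, an explicit test function $v_h\in V_h$ that realizes the supremum up to the constant $\tilde{c}\,m$. Since $\M_h=R_hBV_h$ by construction, every $p_h\in\M_h$ can be written as $p_h=R_hBu_h$ for some $u_h\in V_h$; I would fix such a $u_h$ and take $v_h=u_h$ as the test function.

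The key computation is to evaluate $b(u_h,p_h)$. Using the extension of $b$ to $V\times\tilde{Q}$ and the defining identity $(Bv,q)_{\tilde{Q}}=b(v,q)$, I get $b(u_h,p_h)=(Bu_h,p_h)_{\tilde{Q}}$. Now I invoke the definition of the representation operator, $(R_hp,q_h)_h=(p,q_h)_{\tilde{Q}}$ for all $q_h\in\tilde{\M}_h$, with the choices $p=Bu_h$ and $q_h=p_h=R_hBu_h\in\tilde{\M}_h$. This turns $(Bu_h,p_h)_{\tilde{Q}}$ into $(R_hBu_h,p_h)_h=(p_h,p_h)_h=\|p_h\|_h^2$. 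Hence $b(u_h,p_h)=\|p_h\|_h^2$, which immediately yields
\[
\sup_{v_h\in V_h}\frac{b(v_h,p_h)}{|v_h|}\ge\frac{b(u_h,p_h)}{|u_h|}=\frac{\|p_h\|_h^2}{|u_h|}.
\]
For $p_h\ne 0$ the identity forces $u_h\ne 0$, so the ratio is well defined.

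It then remains to show $\|p_h\|_h\ge\tilde{c}\,m\,|u_h|$, which combined with the display gives $\sup_{v_h}b(v_h,p_h)/|v_h|\ge\tilde{c}\,m\,\|p_h\|_h$, and hence the claim after dividing by $\|p_h\|_h$ and taking the infimum over $p_h$. For this I would chain two inequalities: first, the hypothesis \eqref{eq:ProjCoerces} applied to $q_h=Bu_h\in BV_h$ gives $\|p_h\|_h=\|R_hBu_h\|_h\ge\tilde{c}\,\|Bu_h\|$; second, I need the operator lower bound $\|Bu_h\|\ge m\,|u_h|$.

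The main obstacle is this last lower bound, because the stated condition \eqref{inf-sup_a} has the infimum over $p\in Q$ and literally asserts $\sup_v b(v,p)/|v|\ge m\|p\|$, i.e.\ that the operator $p\mapsto b(\cdot,p)$ is bounded below --- the ``transposed'' statement to what I need. To cross over I would use the standing assumption $Ker(B)=\{0\}$: bounded-belowness of that adjoint operator makes the associated map $V\to Q^*$ surjective, and injectivity then makes it an isomorphism whose inverse has the same norm as that of the adjoint, giving $\sup_{q\in Q}b(v,q)/\|q\|\ge m|v|$ for all $v\in V$; since $Q\subseteq\tilde{Q}$ carries the induced norm, $\|Bv\|=\|Bv\|_{\tilde{Q}}\ge\sup_{q\in Q}b(v,q)/\|q\|\ge m|v|$. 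This equivalence between the inf-sup condition and the operator lower bound for an injective $B$ is standard in the well-posedness theory cited in \cite{A-B, B09}, so in the write-up I would simply record $\|Bv\|\ge m|v|$ as a consequence of \eqref{inf-sup_a} and $Ker(B)=\{0\}$ and apply it at $v=u_h$.
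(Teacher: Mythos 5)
The paper itself contains no proof of this proposition --- it defers to \cite{B09} and \cite{BJ-nc} --- so your attempt can only be measured against the standard argument used there, and your argument is exactly that one. The core computation is correct: writing $p_h=R_hBu_h$, testing with $v_h=u_h$, and chaining the two defining identities gives
\[
b(u_h,p_h)=(Bu_h,p_h)_{\tilde{Q}}=(R_hBu_h,p_h)_h=\|p_h\|_h^2,
\]
and combining this with \eqref{eq:ProjCoerces} applied to $q_h=Bu_h$ and with a lower bound $\|Bu_h\|\geq m\,|u_h|$ produces the constant $\tilde{c}\,m$ after dividing by $\|p_h\|_h$ and taking the infimum. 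All of that bookkeeping is right.

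The step that deserves more scrutiny is precisely the one you flagged as the main obstacle, $\|Bv\|\geq m\,|v|$. Your duality argument (adjoint bounded below $\Rightarrow$ surjectivity, plus injectivity $\Rightarrow$ isomorphism with the same constant) is applied to the operator associated with $b$ restricted to $V\times Q$, which is $P_QB$, where $P_Q$ is the $\tilde{Q}$-orthogonal projection onto $Q$; indeed $\sup_{q\in Q}b(v,q)/\|q\|=\|P_QBv\|$. The injectivity you need is therefore $Ker(P_QB)=\{0\}$, and the paper's standing assumption $Ker(B)=\{0\}$, with $B:V\to\tilde{Q}$, does \emph{not} imply it: $Bv$ can be nonzero yet orthogonal to $Q$. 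This is not a cosmetic point --- in the bare generality of Section \ref{sec:ReviewLSPP} the proposition is actually false: taking $\tilde{Q}=Q\oplus Q^\perp$ and $Bv=(B_1v,B_2v)$ with $B_1$ surjective onto $Q$ but with a nontrivial kernel, and $B_2$ injective but not bounded below, one can satisfy \eqref{inf-sup_a}, \eqref{sup-sup_a}, and \eqref{eq:ProjCoerces} (with $R_h$ an orthogonal projection onto $\tilde{\M}_h\supset BV_h$, so $\tilde{c}=1$) while the discrete inf-sup constant degenerates for test spaces $V_h$ chosen inside $Ker(B_1)$. What rescues the argument is the hypothesis $Range(B)\subset Q$, which the paper imposes in Section \ref{sec:opt} and which holds in the application of Sections \ref{SPLS4RD} and \ref{SPLSd4RD}, where $Q$ is the graph of $\varepsilon\nabla$ and $Bv=\pmat{v}{\varepsilon\nabla v}\in Q$ by construction. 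Under that hypothesis $P_QB=B$, so $Ker(B)=\{0\}$ is the injectivity you need, and your isomorphism argument then gives $\|Bv\|_{\tilde{Q}}=\|Bv\|_{Q}\geq m\,|v|$ with the same constant $m$. So your proof is complete once you state $Range(B)\subset Q$ explicitly at the point where you invoke $Ker(B)=\{0\}$; without that hypothesis no proof can exist, since the statement itself fails.
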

As a consequence of Proposition \ref{discrete-Stability}, we have that  \eqref{abstract:variational2_h}  has  a unique solution $(w_h, p_h)  \in (V_h, \M_h)$ and $w_h=0$. Regarding the {\it approximability property} of the projection type trial space, the following proposition was proved in \cite{BJ-nc}. 
\begin{prop}\label{approxProj}
If $p$ is the solution of \eqref{cont_problem} and $p_h$ is the second component of the solution of \eqref{abstract:variational2_h}, then
\[
\|p-p_h\| \leq C\inf_{q_h\in \M_h}\|p-q_h\|,
\]
whith $C= 1+ \frac{1}{ k_1\, \tilde{c}}$, where $k_1$ and $\tilde{c}$ were introduced  in  \eqref{norm-equiv} and \eqref{eq:ProjCoerces}, respectively.
\end{prop}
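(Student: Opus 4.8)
The plan is to derive the error bound by comparing $p_h$ to an arbitrary element $q_h \in \M_h$ and exploiting the Galerkin-type orthogonality built into the saddle point system \eqref{abstract:variational2_h}. By the triangle inequality,
\[
\|p - p_h\| \le \|p - q_h\| + \|q_h - p_h\| \Forall q_h \in \M_h,
\]
so it suffices to bound $\|q_h - p_h\|$ by a constant multiple of $\|p - q_h\|$. Since $q_h - p_h \in \M_h \subset BV_h$, I would first invoke the discrete $\inf$-$\sup$ condition from Proposition \ref{discrete-Stability}, which gives a test function $v_h \in V_h$ realizing (up to the factor $\tilde{c}\,m$) the norm of $q_h - p_h$ through the form $b(v_h, q_h - p_h)$.

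Next I would split $b(v_h, q_h - p_h) = b(v_h, q_h - p) + b(v_h, p - p_h)$ and treat the two pieces separately. For the second piece, the key observation is that the first equation of \eqref{abstract:variational2_h}, combined with $w_h = 0$ (guaranteed by Proposition \ref{discrete-Stability}), yields $b(v_h, p_h) = \langle F, v_h\rangle$; on the other hand $p$ solves \eqref{cont_problem}, so $b(v_h, p) = \langle F, v_h\rangle$ as well. Hence $b(v_h, p - p_h) = 0$, and only the first piece survives. For that piece I would use continuity of $b$ on $V \times \tilde{Q}$, namely $|b(v_h, q_h - p)| \le M\,|v_h|\,\|q_h - p\|$, though care is needed because the relevant norm on $\M_h$ is the $\|\cdot\|_h$ norm, and one must pass between $\|\cdot\|_h$ and $\|\cdot\|$ using the equivalence \eqref{norm-equiv} with constants $k_1, k_2$.

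Assembling these estimates, the $\inf$-$\sup$ lower bound on $q_h - p_h$ and the continuity upper bound on $b(v_h, q_h - p)$ combine to produce a factor of the form $1/(k_1\,\tilde{c})$ multiplying $\|p - q_h\|$, which together with the $\|p - q_h\|$ from the triangle inequality gives the stated constant $C = 1 + \frac{1}{k_1\,\tilde{c}}$. Finally, taking the infimum over $q_h \in \M_h$ completes the argument. The main obstacle I anticipate is bookkeeping the distinction between the ambient $\tilde{Q}$-norm $\|\cdot\|$ and the discrete norm $\|\cdot\|_h$ in which the stability constant $\tilde{c}$ is expressed: one must apply \eqref{norm-equiv} in the correct direction at each step so that exactly one factor of $k_1$ appears, yielding the sharp constant rather than a cruder bound involving $k_2/k_1$. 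Since the precise normalization of $b$ and of the optimal test norm controls whether $M$ appears explicitly in $C$, I would keep track of whether the formulation uses $a_{opt}$ (in which case $M = m = 1$) to confirm that no continuity constant clutters the final expression.
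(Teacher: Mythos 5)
Your skeleton (triangle inequality, then Galerkin orthogonality $b(v_h,p-p_h)=0$ via $w_h=0$, then a stability bound on $\|q_h-p_h\|$) is the right one, and the orthogonality step is correctly justified. Note that the paper itself does not prove this proposition; it defers to \cite{BJ-nc}. The genuine gap is in the stability step: routing it through the discrete $\inf$-$\sup$ of Proposition \ref{discrete-Stability} together with the continuity bound \eqref{sup-sup_a} gives
\[
\tilde{c}\, m\,\|q_h-p_h\|_h \;\leq\; \sup_{v_h\in V_h}\frac{b(v_h,q_h-p)}{|v_h|} \;\leq\; M\,\|p-q_h\|,
\]
hence the constant $C = 1+\frac{M}{k_1\,\tilde{c}\,m}$, not the stated $C = 1+\frac{1}{k_1\,\tilde{c}}$. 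You noticed this and proposed to rescue the sharp constant by assuming the optimal test norm, so that $M=m=1$; but that assumption is not available here. Proposition \ref{approxProj} is stated (and is true) in the general abstract setting of Section \ref{sec:ReviewLSPP}, for an arbitrary inner product $a_0(\cdot,\cdot)$ on $V$ with general constants $m\leq M$ in \eqref{inf-sup_a} and \eqref{sup-sup_a}. So, as written, your argument proves a strictly weaker statement. (A minor slip along the way: $\M_h = R_hBV_h\subset\tilde{\M}_h$, not $\M_h\subset BV_h$.)

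The fix is to never let the test-space norm $|\cdot|$ enter the estimate, which is possible because of the specific structure $\M_h=R_hBV_h$. Since $p_h,q_h\in\M_h$, write $p_h-q_h=R_hBz_h$ for some $z_h\in V_h$. By the definitions of $R_h$ and $B$, and then your orthogonality applied with the test function $z_h$,
\[
\|p_h-q_h\|_h^2=(R_hBz_h,\,p_h-q_h)_h=(Bz_h,\,p_h-q_h)_{\tilde{Q}}=b(z_h,p_h-q_h)=b(z_h,p-q_h).
\]
Now estimate $b(z_h,p-q_h)=(Bz_h,\,p-q_h)_{\tilde{Q}}\leq \|Bz_h\|\,\|p-q_h\|$ by Cauchy--Schwarz in $\tilde{Q}$ (constant $1$, so $M$ is not needed), and apply \eqref{eq:ProjCoerces} to $Bz_h\in BV_h$ to get $\|Bz_h\|\leq \tilde{c}^{-1}\|R_hBz_h\|_h=\tilde{c}^{-1}\|p_h-q_h\|_h$. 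Cancelling one factor of $\|p_h-q_h\|_h$ yields $\|p_h-q_h\|_h\leq \tilde{c}^{-1}\|p-q_h\|$, and then \eqref{norm-equiv} gives $\|p_h-q_h\|\leq \frac{1}{k_1\tilde{c}}\|p-q_h\|$, so the triangle inequality produces exactly $C=1+\frac{1}{k_1\tilde{c}}$ for every $q_h\in\M_h$. This is where both $m$ and $M$ disappear: the $\inf$-$\sup$ constant $m$ enters Proposition \ref{discrete-Stability} only to relate $b$ to $|v_h|$, a step the sharp argument never takes.
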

\begin{remark}
The choice of $\tilde{\M}_h$ is important. In practice $V_h$ is a space of continuous piecewise polynomials, and by applying the (differentiation) operator $B$ we obtain discontinuous functions. The representation operator $R_h:\tilde{Q}\to \tilde{\M}_h$  acting on $BV_h$  can be viewed also as a smoothing operator, as the range $\tilde{\M}_h$ is a subspace of $\tilde{Q}$ consisting  of continuous functions. The SPLS discretization with $\tilde{\M}_h$ as trial space can be viewed as a (smoothing type) recovery or post-processing process. 
\end{remark} 

\subsection{An Uzawa CG iterative solver}\label{iter_discrete_problem}
Note that a global linear system may be difficult to assemble when solving \eqref{abstract:variational2_h} on  $(V_h,\M_h=R_hBV_h)$, especially  if the operator $R_h$ involves a  global projection.  In this case, bases for the trial spaces $\M_h$ might be difficult to find. One can solve \eqref{abstract:variational2_h} and  avoid building a basis for $\M_h$ by using  an Uzawa Conjugate Gradient (UCG) algorithm.

\begin{algorithm} (UCG) Algorithm \label{alg:UCG}
\vspace{0.1in}

 {\bf Step 1:} {\bf Choose any}  $p_0 \in \M_h$. {\bf Compute} $w_{1} \in V_h $, $q_1, d_1 \in \M_h$ by
\[ 
 \begin{aligned}
& a_0( w_{1}, v_h)& = &\ \<f_h,v_h\> - b(v, p_{0}) &\ &\Forall v_h \in V_h,&\\
&   (q_1, q)_{h} & = & \ b(w_1 ,q)  &\ &\Forall  q \in \M_h,& \ \ d_1:=q_1.
\end{aligned}
\]
\vspace{0.1in}

{\bf Step 2:} {\bf For} $j=1,2,\ldots, $ {\bf compute} 
 $h_j, \alpha_j, p_j, w_{j+1}, q_{j+1}, \beta_j, d_{j+1}$ by 
\[ 
 \begin{aligned}
& {\bf (UCG1)} \ \ \ \  & a_0( h_{j}, v_h) = & - b(v_h, d_j)  && \text{for all} \ v_h \in V_h&\\
& {\bf (UCG\alpha)} \ \ \ \  & \alpha_j =& - \frac{(q_j, q_j)_{h}}{b(h_j,q_j)} \\
& {\bf (UCG2)} \ \ \ \  & p_{j} = & \ p_{j-1} + \alpha_j \  d_j \\
& {\bf (UCG3)} \ \ \ \  & w_{j+1} = & \ w_j + \alpha_j\ h_j \\
& {\bf (UCG4)} \ \ \ \  & (q_{j+1}, q)_{h} = & \ b(w_{j+1} ,q) && \text{for all} \ q \in \M_h& \\
& {\bf (UCG\beta)} \ \ \ \  & \beta_j=& \ \frac{(q_{j+1}, q_{j+1})_{h}}{(q_j,q_j)_{h}} \\
& {\bf (UCG6)} \ \ \ \  & d_{j+1}= & \ q_{j+1} +\beta_j d_j. \\
\end{aligned}
\]
\end{algorithm}
\begin{remark}\label{rem:inversionOnMh} 
From {\bf (UCG4)}, we have that $q_{j+1}= B_h w_{j+1}$. Thus, $q_{j+1}$  can be computed by inverting the Gram matrix corresponding to a basis of $\tilde{\M_h}$, which in our applications is component-wise a space of continuous piecewise linear functions. The Gram matrix corresponding to a basis of $\M_h=R_hBV_h$ is not needed for the computation of $q_{j+1}$ in {\bf (UCG4)} or $q_{1}$ in {\bf Step 1}. 
\end{remark}
The main inversion needed at each step involves $a_0(\cdot,\cdot)$ in \textbf{Step 1} or (\textbf{UCG1}). In operator form, these steps become 
\begin{equation}\label{inversions}
w_1=A_h^{-1}(f_h-B_h^*p_0),\qquad  \mathrm{and} \qquad h_j=-A_h^{-1}(B_h^*d_j).
\end{equation}
In order to build an efficient solver for \eqref{cont_problem}, we  modify Algorithm \ref{alg:UCG} by replacing the action of $A_h^{-1}$ with the action of a suitable preconditioner. 

\subsection{Preconditioning the SPLS discretization}\label{sec:precon}
We summarize a general preconditioning framework to approximate the solution of  \eqref{cont_problem}  that is  presented in \cite{BHJ, BJprec}. We plan to combine this framework with the new concept of {\it optimal} test norm. 
Let $P_h:V_h\to V_h$ be  a general  (preconditioing) operator that is equivalent to $A_h^{-1}$ in the sense that 
\begin{equation}\label{dual_relation}
( P_h f_h, g_h) =(f_h P_h g_h) \  \Forall f_h,g_h\in V_h,
\end{equation}
and
\begin{equation}\label{equiv_forms}
m_1^2|v_h|^2\leq a_0(P_hA_hv_h,v_h)\leq m_2^2|v_h|^2,
\end{equation}
where the  positive constants $m_1^2,m_2^2$ are the smallest and largest eigenvalues of $P_hA_h$, respectively. Condition \eqref{equiv_forms} is equivalent with the fact that the condition number of $P_hA_h$ satisfies
\begin{equation}\label{condition_bound}
\kappa(P_hA_h) = \frac{m_2^2}{m_1^2}.
\end{equation}

It was proved in \cite{BHJ, BJprec} that  the saddle point discretization  \eqref{abstract:variational2_h} would not lose stability and approximability  properties if $\kappa(P_hA_h)$ is independent of $h$  and $A_h^{-1}$ is replaced by $P_h$. The replacement of the action of $A_h^{-1}$   leads to solving
\begin{equation}\label{abstract:variationalPh}
\begin{aligned}
w_h & =P_h (f_h-B_h^*p_h),\\
B_h w_h  & =0. 
\end{aligned}
\end{equation}
The Schur complement associated with the modified problem \eqref{abstract:variationalPh} is  
\[
\tilde{S}_h=B_hP_hB_h^*.
\]
The corresponding  version of Algorithm \ref{alg:UCG} to solve \eqref{abstract:variationalPh} is the following Uzawa Preconditioned Conjugate Gradient (UPCG) algorithm.
\begin{algorithm} (UPCG) Algorithm for Mixed Methods\label{alg:PUCG}
\vspace{0.1in}

 {\bf Step 1:} {\bf Choose any} $p_0 \in \M_h$. {\bf Compute} $u_{1} \in V_h $, $q_1, d_1 \in \M_h$ by 
\[ 
 \begin{aligned}
& u_1 &=&P_h(F_h-B_h^*p_0) \\
& q_1& =& B_h u_1, \ \ d_1:=q_1.
\end{aligned}
\]
\vspace{0.1in}

{\bf Step 2:} {\bf For} $j=1,2,\ldots, $ {\bf compute} 
 $h_j, \alpha_j, p_j, u_{j+1}, q_{j+1}, \beta_j, d_{j+1}$ by 
\[ 
 \begin{aligned}
& {\bf (PCG1)} \ \ \ \  & h_j =& - P_h(B_h^*d_j)  && &\\ 
& {\bf (PCG\alpha)} \ \ \ \  & \alpha_j =& - \frac{(q_j, q_j)_Q}{b(h_j,q_j)} \\
& {\bf (PCG2)} \ \ \ \  & p_{j} = & \ p_{j-1} + \alpha_j \  d_j \\
& {\bf (PCG3)} \ \ \ \  & u_{j+1} = & \ u_j + \alpha_j\ h_j \\
& {\bf (PCG4)} \ \ \ \  & q_{j+1} = & B_h u_{j+1} ,& & & \\
& {\bf (PCG\beta)} \ \ \ \  & \beta_j=& \ \frac{(q_{j+1}, q_{j+1})_Q}{(q_j,q_j)_Q} \\
& {\bf (PCG6)} \ \ \ \  & d_{j+1}= & \ q_{j+1} +\beta_j d_j. \\
\end{aligned}
\]
\end{algorithm}
According to  \cite{BJprec}, the UPCG iterations $p_{j}$ converge to the solution $p_h$ of  \eqref{abstract:variationalPh}, and the rate of convergence for $\|p_{j} -p_h \|_{\tilde{S}_h}$ depends on the condition number of $\tilde{S}_h$ that satisfies
\begin{equation}\label{condition-number-estimate}
\kappa(\tilde{S}_h)\leq  \kappa(S_h)\cdot \kappa(P_hA_h). 
\end{equation}

\section{SPLS formulation for the reaction diffusion problems}\label{SPLS4RD}
In this section, we consider  the SPLS discretization for the reaction diffusion problem using an optimal test norm. The goal is to emphasize how stability and approximability for the mixed formulation can be gained by using the SPLS approach and provide a way of choosing appropriate preconditioners to find an efficient iterative solver. In what follows, $(\cdot, \cdot)$ and $\|\cdot\|$  will denote the standard $L^2$ inner product and norm, respectively. 

To place equation \eqref{PDE_RD} into the general SPLS framework, we define the spaces $V:=H_0^1(\Omega)$, $\tilde{Q}:=L^2(\Omega)\times L^2(\Omega)^d$, and $Q$ as the graph of the operator $\varepsilon\nabla:H_0^1(\Omega)\to L^2(\Omega)^d$, i.e., 
\[
Q:=G(\varepsilon\nabla)=\left\{\pmat{v}{\varepsilon\nabla v}\,|\,v\in H_0^1(\Omega)\right\}.
\]
Since the operator $\varepsilon\nabla$ is bounded from $H_0^1(\Omega)$ to $L^2(\Omega)^d$, the space $Q$ is closed by the Closed Graph Theorem. We define $b:V\times \tilde{Q}\to\mathbb{R}$ as
\[
b(v,\pmat{q}{\bq}):=(cq,v)+(\bq,\nabla v) \Forall v\in V,\pmat{q}{\bq}\in \tilde{Q},
\]
and the linear functional $F\in V^*$ as
\[
\langle F,v\rangle:=(f,v) \Forall v\in H_0^1(\Omega).
\]
With this setting, the SPLS formulation of \eqref{RDstandard} is: Find $\bp=\pmat{u}{\varepsilon\nabla u}\in Q$ such that  
\begin{equation*}
b(v,\bp)=(cu,v)+(\varepsilon \nabla u, \nabla v)=(f,v)\Forall v\in V.
\end{equation*}

On $\tilde{Q}$, we consider the weighted inner product
\begin{equation}\label{c-eps-1}
(\pmat{q}{\bq},\pmat{p}{\bp})_{\tilde{Q}}=(cq,p)+(\varepsilon^{-1}\bq,\bp):= (q,p)_c+(\bq,\bp)_{\varepsilon^{-1}},
\end{equation}
which gives us the corresponding norm
\[
\|\pmat{q}{\bq}\|_{\tilde{Q}}=\left(\|c^{1/2}q\|^2+\|\varepsilon^{-1/2}\bq\|^2\right)^{1/2}.
\]
The operator $B:V\to \tilde{Q}$ is then given by
\[
Bv=\pmat{v}{\varepsilon\nabla v} \Forall v\in V.
\]
We note that 
\[
V_0=\mathrm{Ker}(B)=\{v\in H_0^1(\Omega)\,|\,Bv=0\}=\{0\}.
\]
Thus, the {\it optimal  test norm} on $V$ is induced by the inner product 
\[
a_{opt} (u,v)=(Bu, Bv)_Q= (\varepsilon\nabla u,\nabla v)+(cu,v) \Forall u,v\in V,
\]
which gives rise to the norm
\[
|v|_{opt}=\left(\|c^{1/2}v\|^2+\|\varepsilon^{1/2}\nabla v\|^2\right)^{1/2}.
\]

In addition, according to Section \ref{sec:opt}, we have that the continuity constant $M$ of the bilinear form $b$ and the $\inf-\sup$ constant $m$ are equal to $1$. One can also directly check that 
\begin{equation}\label{inf_supRD}
\sup_{v\in V}\frac{b(v,\pmat{u}{\varepsilon\nabla u})}{|v|_{opt}}=\|\pmat{u}{\varepsilon\nabla u}\|_{{Q}},
\end{equation}
for any $\pmat{u}{\varepsilon\nabla u}\in Q$. While this does lead to optimal continuity and $\inf-\sup$ constants, inverting the operator associated with $|\cdot |_{opt}$ coincides with solving the original problem. Fortunately, at the discrete level we can replace the action of the discrete operator corresponding to $a_{opt} (\cdot,\cdot)$ by a preconditioner, as presented in Section \ref{sec:BPV}. 


\section{SPLS discretization for the Reaction Diffusion Problems}\label{SPLSd4RD}
In this section, we outline choices for the test and trial spaces involved in SPLS discretization. We take $V_h \subset V=H_0^1(\Omega)$ to be the space of continuous piecewise linear polynomials with respect to the mesh $\T_h$ vanishing on the boundary of $\Omega$. To construct a trial space $\M_h$, as discussed in Section \ref{sec:Mh}, we first define $\tilde{\M}_h\subset  \tilde{Q} =L^2(\Omega)\times L^2(\Omega)^d$ as
\[
\tilde{\M}_h:=M_{h}\times \varepsilon \bM_{h},
\]
where $M_{h}$ consists of continuous piecewise linear polynomials with respect to the mesh $\T_h$  (with no restrictions on the boundary) and $\bM_{h}$ is the vector-valued product space in which each component consists of continuous piecewise linear polynomials. Let $\{\phi_1,\dots,\phi_N\}$ denote a nodal basis for $M_{h}$ with respect to the mesh $\T_h$ and $\{\bPhi_1,...,\bPhi_{2N}\}$ denote a nodal basis for $\bM_{h}$, where $\bPhi_j=(\phi_j,0)^T$ and $\bPhi_{N+j}=(0,\phi_j)^T$ for $j=1,\dots, N$. Two different choices the projection type trial space, based on the inner product chosen for $\tilde{\M}_h$, are considered. More details about the construction of these trial spaces can be found in \cite{BJ20}.

For the first choice of projection trial space, we equip $\tilde{\M}_h$ with the inner product induced from $\tilde{Q}$. In this case, $R_h$ is the orthogonl projection from $\tilde{Q}$ onto $\tilde{\M}_h$, and we define the trial space as
\[
\M_h:=R^{\mathrm{\, orth}}_hBV_h,
\]
where we use the notation $R^{\mathrm{\, orth}}_h$ to signify equipping $\tilde{\M}_h$ with the induced inner product from $\tilde{Q}$.

For the second choice of projection trial space, we equip $\tilde{\M}_h$ with an inner product related to lumping the mass matrix. More precisely, for two elements $\pmat{q_h}{\bq_h}, \pmat{p_h}{\bp_h}\in \tilde{\M}_h$, where
\[
\bq_h=\sum_{i=1}^{2N}\alpha_i\varepsilon\bPhi_i, \ \text{and} \ \bp_h=\sum_{i=1}^{2N}\beta_i\varepsilon\bPhi_i,
\]
we define 
\[
\left(\pmat{q_h}{\bq_h},\pmat{p_h}{\bp_h}\right)_h:=(cq_h,p_h)+\sum_{i=1}^{2N}\alpha_i\beta_i(1,\varepsilon\bPhi_i).
\]
In this case, the action of $R_h:\tilde{Q}\to \tilde{\M}_h$ is given by
\[
R_h\pmatbig{q}{\bq}=\pmatbig{Q_hq}{Q_h^{\,\mathrm{lump}}\bq},
\]
where $Q_h:L^2(\Omega) \to M_{h}$ is the orthgonal projection with respect to the weighted inner product $(\cdot,\cdot)_c$ and 
\[
Q^{\,\mathrm{lump}}_h\bq = \sum_{i=1}^{2N}\frac{(\bq,\varepsilon\bPhi_i)_{\varepsilon^{-1}}}{(1,\varepsilon\bPhi_i)}\varepsilon\bPhi_i=\sum_{i=1}^{2N}\frac{(\bq,\bPhi_i)}{(1,\bPhi_i)}\bPhi_i.
\]
We then define the trial space as
\[
\M_h:=R^{\mathrm{\, lump}}_hBV_h,
\]
where we use the notation $R^{\mathrm{\, lump}}_h$ to signify equipping $\tilde{\M}_h$ with this type of inner product. For the remainder of this section, we will write $R_h$ without the superscript for simplicity as the results hold for either choice of inner product.

The discrete mixed variational formulation is: Find $\bp_h=R_hBu_h$, with $u_h\in V_h$, such that
\[
b(v_h,\bp_h)=(f,v_h) \Forall v_h\in V_h,
\]
and the corresponding SPLS discretization, with {\it optimal test norm}, is: Find $(w_h, \bp_h=R_hA\nabla u_h)\in V_h \times \M_h$ such that
\begin{equation}\label{discrete_problemSPLSRD}
\begin{array}{lclll}
a_{opt}(w_h,v_h) & + & b(v_h,\bp_h) &= (f,v_h) &\ \Forall  v_h \in V_h,\\
R_h B w_h & & & =\mathbf{0}.
\end{array}
\end{equation}
In the above problem, as mentioned we can replace the action of the discrete operator corresponding to $a_{opt} (\cdot,\cdot)$ by a preconditioner. The following result regarding the stability of these spaces was proved in \cite{BJ20}. 
\begin{theorem}\label{Stability2}
Let $\Omega \subset \mathbb{R}^2$ be a polygonal domain and $\{T_h\}$ be a family of locally quasi-uniform meshes for $\Omega$. For each $h$, let $V_h$ be the space of continuous linear functions with respect to the mesh $\{\T_h\}$ that vanish on $\partial \Omega$ and $\M_h=R_hBV_h$. Then the family of spaces $\{(V_h,\M_h)\}$ is stable.
\end{theorem}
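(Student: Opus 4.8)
The plan is to verify the sufficient condition of Proposition \ref{discrete-Stability}: since the optimal test norm of Section \ref{sec:opt} makes the continuous $\inf$-$\sup$ constant in \eqref{inf-sup_a} equal to $m=1$, it suffices to produce a constant $\tilde c>0$, independent of $h$, such that $\|R_h q_h\|_h \ge \tilde c\,\|q_h\|$ for every $q_h\in BV_h$. Writing a generic element of $BV_h$ as $q_h=Bv_h=\pmat{v_h}{\varepsilon\nabla v_h}$ with $v_h\in V_h$, the denominator is, by \eqref{c-eps-1},
\[
\|Bv_h\|_{\tilde Q}^2=\|c^{1/2}v_h\|^2+\varepsilon\|\nabla v_h\|^2=|v_h|_{opt}^2 .
\]
First I would record that $R_h$ leaves the scalar component untouched: since $V_h\subset M_h$, the weighted $L^2$-projection onto $M_h$ satisfies $Q_hv_h=v_h$, so $R_hBv_h=\pmat{v_h}{\varepsilon\,\Pi\nabla v_h}$, where $\Pi$ is the $L^2$-projection of the piecewise constant flux onto the continuous piecewise linear vector space $\bM_h$. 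Hence in the orthogonal case $\|R_hBv_h\|_h^2=\|c^{1/2}v_h\|^2+\varepsilon\|\Pi\nabla v_h\|^2$, and the lumped case yields the same quantity up to $h$-independent factors by \eqref{norm-equiv} and the mass--lumping equivalence $M\simeq D$ from \eqref{eq:MequivD}. Thus the whole estimate reduces to controlling the flux that is lost under smoothing.

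The heart of the argument, and the step I expect to be the main obstacle, is a mesh-independent lower bound for the smoothed flux, namely
\[
\|\Pi\nabla v_h\|\ \ge\ \delta\,\|\nabla v_h\|\qquad\text{for all }v_h\in V_h,
\]
with $\delta>0$ depending only on the shape-regularity and local quasi-uniformity of $\{\T_h\}$. Equivalently, a gradient of a continuous piecewise linear may not be nearly $L^2$-orthogonal to $\bM_h$. This is precisely where the hypothesis $\Omega\subset\R^2$ enters: in one dimension the alternating ``zig-zag'' gradient is exactly $L^2$-orthogonal to the continuous piecewise linears and the bound fails, whereas in two dimensions the relation $(\nabla v_h,\bw_h)=-(v_h,\div\bw_h)$ for $\bw_h\in\bM_h$ (the boundary term vanishing because $v_h\in H_0^1(\Omega)$) leaves no nonzero $V_h$-gradient orthogonal to $\bM_h$ on a patch. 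I would make this quantitative by a compactness-and-scaling argument: establish the bound on a fixed reference patch, where it is a finite-dimensional, mesh-free statement asserting a positive angle between the two subspaces $\nabla V_h$ and $\bM_h^{\perp}$, and then transfer it to each physical patch by affine scaling, using local quasi-uniformity to keep $\delta$ uniform.

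Granting the flux bound, the conclusion is immediate and covers all $\varepsilon$: since the scalar part is preserved,
\[
\|R_hBv_h\|_h^2=\|c^{1/2}v_h\|^2+\varepsilon\|\Pi\nabla v_h\|^2\ \ge\ \|c^{1/2}v_h\|^2+\varepsilon\delta^2\|\nabla v_h\|^2\ \ge\ \delta^2\,\|Bv_h\|_{\tilde Q}^2 .
\]
In the reaction-dominated regime $\varepsilon\lesssim h^2$ one may even bypass the flux bound: the inverse inequality $\|\nabla v_h\|\le C_{inv}h^{-1}\|v_h\|$ (see \cite{brenner-scott}) together with $c\ge c_{min}$ gives $\varepsilon\|\nabla v_h\|^2\le C_{inv}^2c_{min}^{-1}(\varepsilon/h^2)\|c^{1/2}v_h\|^2$, so the surviving term $\|c^{1/2}v_h\|^2\le\|R_hBv_h\|_h^2$ already dominates $\|Bv_h\|_{\tilde Q}^2$. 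Either way \eqref{eq:ProjCoerces} holds with an $h$-independent $\tilde c$, and Proposition \ref{discrete-Stability} then delivers the claimed stability; the approximability of $\{(V_h,\M_h)\}$ follows from Proposition \ref{approxProj}, and both choices of inner product on $\tilde\M_h$ are handled identically through the equivalences noted above.
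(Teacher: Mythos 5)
Your framework-level moves are sound: reducing Theorem \ref{Stability2} to the bound \eqref{eq:ProjCoerces} of Proposition \ref{discrete-Stability}, observing that both choices of $R_h$ leave the scalar component of $Bv_h$ untouched (so $\|R_hBv_h\|_h^2\simeq\|c^{1/2}v_h\|^2+\varepsilon\|\Pi\nabla v_h\|^2$, with $\Pi$ the $L^2$ projection onto $\bM_h$), and the inverse-inequality argument that settles the regime $\varepsilon\lesssim h^2$. For what it is worth, the paper contains no proof to compare against line by line: it simply cites \cite{BJ20}. But your proof collapses at the step you yourself flagged as the heart of the matter.

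The flux bound $\|\Pi\nabla v_h\|\geq\delta\|\nabla v_h\|$ with $\delta$ independent of $h$ is \emph{false} in two dimensions, and it fails on the very mesh used in Section \ref{num4rd}: the uniform triangulation of the unit square with every diagonal drawn from top left to bottom right. Take $v_h\in V_h$ with nodal values $v_h(ih,jh)=h(-1)^{i+j}$ at interior nodes and $0$ on $\partial\Omega$; this is the one-dimensional zigzag in the variable $x+y$, and on every triangle not meeting the boundary $\nabla v_h=\pm 2(1,1)$. Around any node $z$ at distance at least $2h$ from $\partial\Omega$, the six triangles of the patch have equal area and carry three gradients $+2(1,1)$ and three gradients $-2(1,1)$; since $\int_T\phi_z=|T|/3$, this gives $(\nabla v_h,\phi_z\be_k)=0$ for $k=1,2$. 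Thus the data defining $\Pi\nabla v_h$ (and likewise $Q_h^{\mathrm{lump}}\nabla v_h$) vanishes except at the $O(h^{-1})$ near-boundary nodes, which yields $\|\Pi\nabla v_h\|\leq C\|\nabla v_h\|_{L^2(S_h)}\leq Ch^{1/2}\|\nabla v_h\|$, where $S_h$ is an $O(h)$-wide boundary strip. In other words, the zigzag pathology you assigned to $d=1$ occurs verbatim in $d=2$: the checkerboard is orthogonal to every interior nodal function of $\bM_h$, so there is no uniform positive angle between $\nabla V_h$ and $\bM_h$, and the reference-patch compactness argument you sketch cannot produce an $h$-independent $\delta$. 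Consequently \eqref{eq:ProjCoerces} itself fails whenever $\varepsilon\gtrsim h$ (for $\varepsilon=1$ the checkerboard gives $\|R_hBv_h\|_h^2/\|Bv_h\|^2\simeq(h^2+h)/(h^2+1)\to 0$), and your argument proves the theorem only under $\varepsilon\lesssim h^2$. Nor can the lemma be bypassed by a cleverer choice of test function while keeping this trial space: for $R_h=R_h^{\mathrm{orth}}$ one has $P_{\M_h}Bv_h=R_hBv_h$, and since $\dim\M_h=\dim BV_h$ the smallest singular value of $P_{BV_h}|_{\M_h}$ --- which is exactly the discrete inf-sup constant in the optimal test norm --- equals that of its adjoint $P_{\M_h}|_{BV_h}$; hence \eqref{eq:ProjCoerces} is necessary as well as sufficient here, and the checkerboard shows that $h$-uniform stability at fixed $\varepsilon$ genuinely degenerates (at rate $h^{1/2}$) on this mesh; the growing iteration counts for $\varepsilon=10^{-1},10^{-2}$ in Table \ref{RD-UniformTab} are consistent with this. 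Any correct proof must therefore exploit the interplay between $\varepsilon$ and $h$ (as your closing inverse-inequality remark does) or impose structure on the mesh beyond local quasi-uniformity; the unqualified flux bound is not salvageable.
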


\begin{remark}
While the above result assumes $\Omega \subset \mathbb{R}^2$ is a polygonal domain, the result can be extended to polyhedral domains in $\mathbb{R}^3$.
\end{remark}

\section{Numerical results}\label{num4rd} 
We considered equation \eqref{PDE_RD} on the unit square with variable coefficient $c=2(1+x^2+y^2)$ and $f$ computed such that the exact solution is given by
\[
\begin{aligned}
u(x,y)&=x(1-x)\left(1-e^{-y/\sqrt{\varepsilon}}\right)\left(1-e^{(y-1)/\sqrt{\varepsilon}}\right)\\
&+y(1-y)\left(1-e^{-x/\sqrt{\varepsilon}}\right)\left(1-e^{(x-1)/\sqrt{\varepsilon}}\right),
\end{aligned}
\]
as considered in \cite{Li01}. For this problem, the solution has boundary layers on all sides of the unit square. The test space $V_h=\subset H_0^1(\Omega)$ was chosen to be the space of continuous piecewise linear polynomials with respect to the mesh $\mathcal{T}_h$ and $\M_h=R_hBV_h$ as described in Section \ref{SPLSd4RD}. We used Algorithm \ref{alg:PUCG} to solve \eqref{discrete_problemSPLSRD} with two types of preconditioners: the sBPV preconditioner introduced in Section \ref{sBVP} and a multigrid preconditioner with Gauss-Seidel smoother. We applied the method on both a uniform mesh as well as a Shishkin type mesh, introduced in \cite{Shishkin89}. Details on the implementation of the sBVP preconditioner can be found in the Appendix.

For the Shishkin mesh, we followed the construction outlined in \cite{Roos-Schopf15}. We add it here for completion. We first assume the parameter $N$ is an integer multiple of 8. This refers to the number of mesh intervals in the $x$ and $y$ directions. The mesh itself is the tensor product of two one-dimensional Shishkin meshes $\mathcal{T}_x\times \mathcal{T}_y$. The process for obtaining $\mathcal{T}_x$ (and $\mathcal{T}_y$) is as follows. The interval $[0,1]$ is first decomposed into three subintervals $[0,\lambda]$, $[\lambda, 1-\lambda]$, and $[1-\lambda, 1]$, where 
\begin{equation}\label{lambda_param}
\lambda=\min\left\{\frac{1}{4}, 2\sqrt{\frac{\varepsilon}{c^*}}\ln N\right\} \ \ \mathrm{with} \ \ 0<c^*<c.
\end{equation}
The intervals $[0,\lambda]$ and $[1-\lambda, 1]$ are then partitioned into $N/4$ subintervals of length $\displaystyle{\frac{4\lambda}{N}}$, while the interval $[\lambda, 1-\lambda]$ is partitioned into $N/2$ subintervals of length $\displaystyle{\frac{2(1-2\lambda)}{N}}$. The triangular mesh is obtained by drawing diagonals from the top left to bottom right of each quadrilateral. 

In the case of the Shishkin mesh, we also measured the SPLS error in a balanced norm instead of the norm on ${Q}$. This is due to the fact that for small $\varepsilon$ the $L^2$ part of the norm on $Q$ dominates, leading to an unbalanced norm not adequate to accurately measure the error, see \cite{Lin-Stynes12, Roos-Schopf15}. More precisely, in this case, we compute 
\[
Error:=\left(\|u-u_h\|^2+\varepsilon^{1/2}\|\nabla u - R_h \nabla u_h\|^2\right)^{1/2}.
\]
In the Shishkin mesh case, we used a stopping criteria of $\|q_j\|_Q \leq  10^{-10}$ for $10^{-8}\leq \varepsilon\leq 10^{-4}$, and a stopping criteria of $\|q_j\|_Q\leq 10^{-16}$ for $10^{-14}\leq \varepsilon \leq 10^{-10}$. In the case of a uniform mesh, we used a stopping criteria of $\|q_j\|_Q \leq  10^{-8}$ for $\varepsilon \geq  10^{-6}$.

Table \ref{RD-UniformTab} displays results using uniform meshes, $\M_h=R_h^{\,\text{orth}}AV_h$, and both the sBVP and multigrid preconditioners. For $\varepsilon\leq10^{-3}$ it is shown that the sBVP preconditioner retains a lower iteration count across all levels. The loss of order for small $\varepsilon$ in the uniform mesh case is due to the boundary layers and would need further refinements to resolve them. 

\setlength{\tabcolsep}{0.15em}
\begin{table}[h!] 
\begin{center} 
\begin{tabular}{|*{13}{c|}} 
\hline 
\multicolumn{1}{|c|}{\multirow{2}{*}{\parbox{1.0cm}{\centering Lev/$\varepsilon$}}}& \multicolumn{4}{c|}{$10^{-1}$}& \multicolumn{4}{c|}{$10^{-2}$}& \multicolumn{4}{c|}{$10^{-3}$}\\
\cline{2-13} 
&Error&Order&(a)&(b)&Error&Order&(a)&(b)&Error&Order&(a)&(b)\\
\hline 
1&0.0511&-&6&6&0.0866&-&5&6&0.1490&-&3&5\\ 
\hline 
2&0.0146&1.81&16&21&0.0260&1.74&9&14&0.0680&1.13&5&7\\ 
\hline 
3&0.0041&1.85&30&34&0.0072&1.85&17&25&0.0248&1.46&7&11\\ 
\hline 
4&0.0011&1.88&57&53&0.0019&1.92&31&40&0.0074&1.75&12&20\\ 
\hline 
5&0.0003&1.90&98&81&0.0005&1.95&57&61&0.0020&1.89&22&34\\ 
\hline 
6&7.9e-05&1.92&152&76&0.0001&1.96&99&85&0.0005&1.96&40&53\\ 
\hline 
7&2.1e-05&1.92&215&72&3.2e-05&1.97&150&82&0.0001&1.99&71&76\\ 
\hline 
8&6.1e-06&1.76&220&49&8.4e-06&1.95&190&58&3.2e-05&2.00&108&89\\ 
\hline 
\multicolumn{1}{|c|}{\multirow{2}{*}{\parbox{1.0cm}{\centering Lev/$\varepsilon$}}}& \multicolumn{4}{c|}{$10^{-4}$}& \multicolumn{4}{c|}{$10^{-5}$}& \multicolumn{4}{c|}{$10^{-6}$}\\
\cline{2-13} 
&Error&Order&(a)&(b)&Error&Order&(a)&(b)&Error&Order&(a)&(b)\\
\hline 
1&0.1820&-&3&3&0.1920&-&2&2&0.1960&-&2&2\\ 
\hline 
2&0.1090&0.74&3&5&0.1250&0.62&2&4&0.1290&0.60&2&4\\ 
\hline 
3&0.0605&0.85&4&6&0.0820&0.61&3&5&0.0889&0.54&2&5\\ 
\hline 
4&0.0270&1.16&5&8&0.0505&0.70&3&5&0.0607&0.55&2&5\\ 
\hline 
5&0.0092&1.56&7&13&0.0267&0.92&4&7&0.0395&0.62&3&5\\ 
\hline 
6&0.0026&1.81&14&25&0.0108&1.30&5&9&0.0234&0.76&3&6\\ 
\hline 
7&0.0007&1.92&25&41&0.0034&1.66&8&17&0.0114&1.04&4&8\\ 
\hline 
8&0.0002&1.98&45&62&0.0009&1.86&16&30&0.0042&1.44&5&10\\ 
\hline 
\end{tabular} 
\caption{Uniform, $\M_h=R^{\mathrm{\, orth}}_hBV_h$ (a)It-sBPV (b)It-mgGS}\label{RD-UniformTab}
\end{center} 
\end{table} 

Tables \ref{RD-ShishkinTab} and \ref{RD-ShishkinLumpTab} display results using Shishkin type meshes and the sBVP preconditioner along with both types of trial spaces outlined in Section \ref{SPLSd4RD}. Here, $N$ is related to the level according to Level = $\log_2 (N) - 1$. According to \cite{Lin-Stynes12, Roos-Schopf15}, standard Galerkin methods for \eqref{RDstandard} lead to a covergence rate of $\mathcal{O}(N^{-1}\ln N)$ using piecewise linear approximation. As shown in Tables \ref{RD-ShishkinTab} and \ref{RD-ShishkinLumpTab}, we obtain a convergence rate of $\mathcal{O}\left((N^{-1}\ln N)^2\right)$ using the SPLS method. The numerical tests appear to show that in the case of Shishkin meshes, the sBVP preconditioner appears to be robust with respect to $\varepsilon$ for a fixed stopping criteria. In addition, one notable advantage of the preconditioner outlined in this paper is ease of implementation. All of the required parameters and matrices needed to implement the sBVP preconditioner, given in \eqref{eq:BVPImp} or \eqref{eq:sBVPImp}, are naturally computed in a standard implementation aside from the matrix that relates bases between spaces. 

We mention here that, for the sBPV preconditioner on a Shiskin mesh, we took adavantage of the fact that the Shiskin mesh is topologically equivalent with a uniform mesh. We used a piecewise linear bijection (in each direction) to shift the uniform nodes to the Shiskin nodes. For the implementation of sBPV on Shiskin meshes, we used the same extension and restriction operators as in the case of uniform refinement. We plan to investigate the convergence of the new sBVP preconditioner on nonuniform refinements with suitable extension and restriction operators in a future work. This seems to be a challenging problem by itself and, by the best knowledge of the authors, has not been addressed in the case of Shiskin refinements or more general cases of fitted meshes. 
\begin{remark}
All numerical tests performed with the sBVP preconditioner used the preconditioner given by \eqref{eq:BVPImp} as it results in a lower iteration count. At the expense of a slight increase in iteration count, the form given by \eqref{eq:sBVPImp} can be implemented to reduce the computational time. 
\end{remark}

\setlength{\tabcolsep}{0.25em}
\begin{table}[h!] 
\begin{center} 
\begin{tabular}{|*{10}{c|}} 
\hline 
\multicolumn{1}{|c|}{\multirow{2}{*}{\parbox{1.0cm}{\centering Lev/$\varepsilon$}}}& \multicolumn{3}{c|}{$10^{-4}$}& \multicolumn{3}{c|}{$10^{-6}$}& \multicolumn{3}{c|}{$10^{-8}$}\\
\cline{2-10} 
&Error&Order&It&Error&Order&It&Error&Order&It\\
\hline 
1&0.2030&-&4&0.1970&-&3&0.1970&-&2\\ 
\hline 
2&0.1570&0.89&5&0.1540&0.86&5&0.1540&0.86&4\\ 
\hline 
3&0.1020&1.06&7&0.1020&1.03&6&0.1020&1.03&6\\ 
\hline 
4&0.0539&1.36&11&0.0538&1.35&10&0.0538&1.35&9\\ 
\hline 
5&0.0226&1.70&18&0.0226&1.70&17&0.0226&1.70&15\\ 
\hline 
6&0.0079&1.94&30&0.0079&1.95&30&0.0079&1.95&28\\ 
\hline 
7&0.0025&2.04&50&0.0025&2.04&55&0.0025&2.04&51\\ 
\hline 
8&0.0008&2.05&88&0.0008&2.05&101&0.0008&2.05&95\\ 
\hline
\multicolumn{1}{|c|}{\multirow{2}{*}{\parbox{1.0cm}{\centering Lev/$\varepsilon$}}}& \multicolumn{3}{c|}{$10^{-10}$}& \multicolumn{3}{c|}{$10^{-12}$}& \multicolumn{3}{c|}{$10^{-14}$}\\
\cline{2-10} 
&Error&Order&It&Error&Order&It&Error&Order&It\\
\hline 
1&0.1970&-&3&0.1970&-&3&0.1970&-&2\\ 
\hline 
2&0.1540&0.86&7&0.1540&0.86&7&0.1540&0.86&6\\ 
\hline 
3&0.1020&1.03&11&0.1020&1.03&10&0.1020&1.03&9\\ 
\hline 
4&0.0538&1.35&17&0.0538&1.35&16&0.0538&1.35&15\\ 
\hline 
5&0.0226&1.70&28&0.0226&1.70&27&0.0226&1.70&25\\ 
\hline 
6&0.0079&1.95&50&0.0079&1.95&47&0.0079&1.95&45\\ 
\hline 
7&0.0025&2.04&90&0.0025&2.04&86&0.0025&2.04&83\\ 
\hline 
8&0.0008&2.05&166&0.0008&2.05&159&0.0008&2.05&153\\ 
\hline 
\end{tabular} 
\caption{Shishkin,  $\M_h=R^{\mathrm{\, orth}}_hBV_h$ }\label{RD-ShishkinTab} 
\end{center} 
\end{table}

\setlength{\tabcolsep}{0.25em}
\begin{table}[h!] 
\begin{center} 
\begin{tabular}{|*{10}{c|}} 
\hline 
\multicolumn{1}{|c|}{\multirow{2}{*}{\parbox{1.0cm}{\centering Lev/$\varepsilon$}}}& \multicolumn{3}{c|}{$10^{-4}$}& \multicolumn{3}{c|}{$10^{-6}$}& \multicolumn{3}{c|}{$10^{-8}$}\\
\cline{2-10} 
&Error&Order&It&Error&Order&It&Error&Order&It\\
\hline 
1&0.2200&-&3&0.2140&-&3&0.2130&-&2\\ 
\hline 
2&0.1800&0.70&4&0.1790&0.61&4&0.1790&0.60&4\\ 
\hline 
3&0.1280&0.84&6&0.1280&0.84&6&0.1280&0.84&5\\ 
\hline 
4&0.0776&1.07&8&0.0776&1.06&8&0.0777&1.06&7\\ 
\hline 
5&0.0370&1.45&13&0.0369&1.45&12&0.0370&1.45&11\\ 
\hline 
6&0.0138&1.84&24&0.0137&1.84&21&0.0137&1.84&20\\ 
\hline 
7&0.0044&2.05&48&0.0043&2.06&39&0.0043&2.06&36\\ 
\hline 
8&0.0013&2.08&100&0.0013&2.09&71&0.0013&2.09&67\\ 
\hline 
\multicolumn{1}{|c|}{\multirow{2}{*}{\parbox{1.0cm}{\centering Lev/$\varepsilon$}}}& \multicolumn{3}{c|}{$10^{-10}$}& \multicolumn{3}{c|}{$10^{-12}$}& \multicolumn{3}{c|}{$10^{-14}$}\\
\cline{2-10} 
&Error&Order&It&Error&Order&It&Error&Order&It\\
\hline 
1&0.2130&-&3&0.2130&-&3&0.2130&-&2\\ 
\hline 
2&0.1790&0.60&6&0.1790&0.60&6&0.1790&0.60&6\\ 
\hline 
3&0.1280&0.84&9&0.1280&0.84&8&0.1280&0.84&8\\ 
\hline 
4&0.0777&1.06&13&0.0777&1.06&12&0.0777&1.06&11\\ 
\hline 
5&0.0370&1.45&21&0.0370&1.45&19&0.0370&1.45&18\\ 
\hline 
6&0.0137&1.84&35&0.0137&1.84&34&0.0137&1.84&32\\ 
\hline 
7&0.0043&2.06&64&0.0043&2.06&116&0.0043&2.06&58\\ 
\hline 
8&0.0013&2.09&118&0.0013&2.09&113&0.0013&2.09&108\\ 
\hline 
\end{tabular} 
\caption{Shishkin, $\M_h = R^{\mathrm{\, lump}}_hBV_h$}\label{RD-ShishkinLumpTab} 
\end{center} 
\end{table}


\section{Conclusion}\label{sec:conclusion} 
We presented  a preconditioning technique for the singularly perturbed reaction diffusion problem. We considered the concept of  saddle point reformulation of the problem and the concept of optimal test norm as presented in  \cite{J5onDPG,dem-fuh-heu-tia19}. We showed  the performance of our approach on a  combination of two projection trial spaces and two different  preconditioners that efficently cover a wide range of the parameter $\varepsilon$. The method is also robust with respect to $\varepsilon$. The efficiency of the Uzawa preconditioned CG solver depends on the robustness and efficiency of preconditioners for the discrete {\it optimal norm} on the test space $V_h$.  For quasi-uniform meshes, we introduced a simplified version of the Bramble-Pasciak-Vassilevski preconditioner. The numerical experiements demonstrate the preconditioner performs well even in the case of Shiskin type refinements. Also, in the case of Shishkin meshes, we obtain higher order approximation of the gradient of the solution. 

\section{Appendix: A note on sBVP implementation}\label{sec:appendix}

Using the $(\cdot, \cdot)$ inner product on $V_h$, we can identify $V_h^*$ with $V_h$. The implementation of the sBVP preconditioner defined in \eqref{eq:sBVP} is done by computing the coordinate vector of the action of $B^{sBVP}_J$ on dual vectors. To be more precise, let $M_k$ be the mass matrix for the basis $\{\varphi_i^k,i=\overline{1:n_k}\}$, and let $D_k$ be  the diagonal matrix with entries $D_{ii} = (1,\varphi_i^k)$. We define $E_k$  to be the $n_J \times n_k$  matrix that relates  the bases on $V_k$ and $V_J$. That is, for the bases $\{\varphi_i^k, i=\overline{1:n_k}\}$ of  $V_k$ and  $\{\varphi_i^J,i=\overline{1:n_J}\}$ of $V_J$, we have 
\begin{equation}\label{eq:EkT}
(\varphi_1^k,\varphi_2^k,\dots,\varphi_{n_k}^k)^T = E_k^T (\varphi_1^J,\varphi_2^J,\dots,\varphi_{n_J}^J)^T.
\end{equation}

For $f_h\in V_h$, the (dual) vector in $\R^{n_k}$ is defined by
\[
\underset{\sim_k}{f_h}:=((f_h,\varphi_1^k),(f_h,\varphi_2^k),\dots,(f_h,\varphi_{n_k}^k))^T.
\]
For $w_h  =\sum_{i=1}^{n_k} \alpha_i^k\varphi_i^k \in V_k \subset  V_h$, the coordinate vector in $\R^{n_k}$ is denoted by 
\[
\overset{\sim_k} {w_h}:=(\alpha_1^k,\alpha_2^k,\dots,\alpha_{n_k}^k)^T.
\]
Using \eqref{eq:EkT} it is easy to check that
\begin{equation}\label{eq:DC}
\underset{\sim_k}{f_h} = E_k^T  \, \underset{\sim_J}{f_h}, \ \text{and} \ \  \overset{\sim_J} {w_h} =E_k\, \overset{\sim_k} {w_h},
\end{equation}
and by letting $w_h:= \tilde{Q}_k\,f_h$, we have, due to \eqref{eq:Qtilda} and \eqref{eq:DC}, 
\[
\overset{\sim_J} {w_h} = E_k\, D_k^{-1} E_k^T\,  \underset{\sim_J}{f_h}.
\]

To obtain the contribution of  $\gamma_J I$ in \eqref{eq:sBVP}, we note that $ \overset{\sim_J} {f_h} = M_J^{-1} \underset{\sim_J}{f_h}$. Thus, the matrix version of \eqref{eq:sBVP} is given by
\begin{equation}\label{eq:BVPImp}
B_J^{sBVP} \underset{\sim}{f_J} = \gamma_JM_J^{-1}\underset{\sim}{f_J} + \sum_{j=0}^{J-1}(\gamma_j-\gamma_{j+1})E_jD_j^{-1}E_j^T\underset{\sim}{f_J}.
\end{equation}
To avoid mass matrix inversion, we can  use the equivalence \eqref{eq:MequivD} and further simplify $sBVP$ to the (matrix) version
\begin{equation}\label{eq:sBVPImp}
B_J^{sBVP}\underset{\sim}{f_J} = \gamma_J D_J^{-1}\underset{\sim}{f_J} + \sum_{j=0}^{J-1}(\gamma_j-\gamma_{j+1})E_jD_j^{-1}E_j^T\underset{\sim}{f_J}.
\end{equation}
This way, we avoid mass matrix inversion and the iterative process is faster.
We note that the matrix version of \eqref{eq:BVP}  just uses $M_j$ instead of $D_j$ in \eqref{eq:sBVPImp}.




\def\cprime{$'$} \def\ocirc#1{\ifmmode\setbox0=\hbox{$#1$}\dimen0=\ht0
  \advance\dimen0 by1pt\rlap{\hbox to\wd0{\hss\raise\dimen0
  \hbox{\hskip.2em$\scriptscriptstyle\circ$}\hss}}#1\else {\accent"17 #1}\fi}
  \def\cprime{$'$} \def\ocirc#1{\ifmmode\setbox0=\hbox{$#1$}\dimen0=\ht0
  \advance\dimen0 by1pt\rlap{\hbox to\wd0{\hss\raise\dimen0
  \hbox{\hskip.2em$\scriptscriptstyle\circ$}\hss}}#1\else {\accent"17 #1}\fi}

\end{document}